\definecolor{shadecolor}{gray}{0.9}
\tikzset{
        ->,  
        node distance=5.5cm, 
        every state/.style={thick, fill=gray!10}, 
        initial text=$ $, 
        }
\theoremstyle{plain}  
\newtheorem{theorem}{Theorem}[section] 
\newtheorem{lemma}[theorem]{Lemma}
\theoremstyle{definition}
\newtheorem{example}[theorem]{Example}
\newtheoremstyle{assumption}
{3pt}
{3pt}
{}
{}
{\bf}
{.}
{.5em}
{\thmname{#1} (\thmnote{#3}\thmnumber{#2})}
\theoremstyle{assumption}
\theoremstyle{remark} 
\newtheorem{remark}[theorem]{Remark}
\newcommand{\los}{\mathrm{L}}%
\newcommand{\blos}{\overline{\mathrm{L}}}%
\newcommand{\iden}{\mathrm{V}}%
\renewcommand\theta{\vartheta}
\newcommand{\N}{\ensuremath{\mathbb{N}}}%
\newcommand{\E}{\ensuremath{\mathbb{E}}}%
\newcommand{\Pb}{\ensuremath{\mathbb{P}}}%
\newcommand{\rP}{\ensuremath{\mathrm{P}}}%
\newcommand{\rp}{\ensuremath{\mathrm{p}}}%
\newcommand{\rQ}{\ensuremath{\mathrm{Q}}}%
\newcommand{\rS}{\ensuremath{\mathrm{S}}}%
\newcommand{\R}{\ensuremath{\mathbb{R}}}%
\newcommand{\Q}{\ensuremath{\mathbb{Q}}}%
\newcommand{\Aa}{\ensuremath{\mathcal{A}}}%
\newcommand{\B}{\ensuremath{\mathcal{B}}}%
\newcommand{\F}{\ensuremath{\mathcal{F}}}%
\newcommand{\cP}{\ensuremath{\mathcal{P}}}%
\newcommand{\cQ}{\ensuremath{\mathcal{Q}}}%
\newcommand{\dd}{\ensuremath{{\mathrm d}}}%
\newcommand{\ent}{\mathrm{ent}}
\newcommand{\ES}{\mathrm{ES}}
\newcommand{\VaR}{\mathrm{VaR}}
\newcommand{\ind}{\mathds{1}}
\DeclareMathOperator{\CRPS}{CRPS}
\DeclareMathOperator{\LOGS}{LOGS}
\renewcommand\theta{\vartheta}
\newcommand{\odom}{O}
\newcommand{\sodom}{\mathcal O}
\newcommand{\adom}{A}
\renewcommand{\bf}{\normalfont \bfseries}
\def\be{\begin{equation} \label}
\def\ee{\end{equation}}
\newcommand{\Comments}{1}
\newcommand{\mynote}[2]{\ifnum\Comments=1\textcolor{#1}{#2}\fi}
\newcommand{\mytodo}[2]{\ifnum\Comments=1%
  \todo[linecolor=#1!80!black,backgroundcolor=#1,bordercolor=#1!80!black]{#2}\fi}
\begin{document}
%
%
%
\title{Measurability of functionals and of ideal point forecasts}
\author{Tobias Fissler\thanks{Vienna University of Economics and Business (WU), Department of Finance, Accounting and Statistics, Welthandelsplatz 1, 1020 Vienna, Austria, 
		e-mail: \href{mailto:tobias.fissler@wu.ac.at}{tobias.fissler@wu.ac.at}
		} 
		\and 
		Hajo Holzmann\thanks{Philipps-Universit\"at Marburg,
		Fachbereich Mathematik und Informatik, 
		Hans-Meerwein-Stra\ss e, 35043 Marburg, Germany,
		e-mail: \href{holzmann@mathematik.uni-marburg.de}{holzmann@mathematik.uni-marburg.de} }
		}

\maketitle

\begin{abstract}
\noindent
\textbf{Abstract.}
The ideal probabilistic forecast for a random variable $Y$ based on an information set $\F$ is the conditional distribution of $Y$ given $\F$. In the context of point forecasts aiming to specify a functional $T$ such as the mean, a quantile or a risk measure, the ideal point forecast is the respective functional applied to the conditional distribution. 
This paper provides a theoretical justification why this ideal forecast is actually a forecast, that is, an $\F$-measurable random variable. To that end, the appropriate notion of measurability of $T$ is clarified and this measurability is established for a large class of practically relevant functionals, including elicitable ones. 
More generally, the measurability of $T$ implies the measurability of any point forecast which arises by applying $T$ to a probabilistic forecast.
Similar measurability results are established for proper scoring rules, the main tool to evaluate the predictive accuracy of probabilistic forecasts.
\end{abstract}
\noindent
\textit{Keywords:}
Bayes act;
elicitability;
forecast; 
information set; 
scoring function;
scoring rule

\noindent
\textit{AMS 2020 Subject Classification: } 62C99; 91B06


%
%
%

\section{Introduction}

A typical aspect of decision making, be it in business, politics, or private life, is that decisions should account for unknown or future events.
Hence, these decisions commonly base on predictions or forecasts for these events.
Fortunately, one regularly has at least partial information about the events of interest, such as  regressors or feature variables in a cross-sectional setting or past observations of the event in a time series framework, which help to improve the predictions.
Expressed in terms of mathematical statistics, 
 the event of interest is a random element $Y$ on a probability space which often attains values in the real numbers, and the given information is modelled as a sub-$\sigma$-algebra $\F$ of the $\sigma$-algebra  of events. 

It has been argued in the literature that probabilistic forecasts, taking the form of probability measures, distributions, or densities, are most informative and should be preferred, since they directly express the uncertainty about the future event \citep{Dawid1984, GneitingETAL2007}. The \emph{ideal} or truthful probabilistic forecast of $Y$ based on $\F$ is the conditional distribution of $Y$ given $\F$, denoted by $\Pb_{Y|\F}$. 
``However, practical situations may require single-valued point forecasts, for reasons of decision making, reporting requirements, or communications, among others'' \citep{GneitingKatzfuss2014}. 
In such situations, one summarises the uncertainty in a typically real-valued functional $T$ of the distribution of $Y$ such as the mean, a quantile, an expectile, or a law-determined risk measure such as Expected Shortfall.
Then, the ideal or truthful point forecast for $Y$ based on $\F$ is the functional $T$ applied to the conditional distribution of $Y$ given $\F$, that is, $T(Y|\F) := T(\Pb_{Y|\F})$ \citep{NoldeZiegel2017}.
This concatenation also turns out to be the main idea in building dynamic or conditional risk measures \citep{Weber2006}.
 
Ideal forecasts based on $\F$ need to be $\F$-measurable random elements, or random variables, for the case of point forecasts. 
On the one hand, this ensures that they actually only exploit the information in $\F$ and no additional sources. From a more technical perspective, on the other hand, measurability ensures that one can assign probabilities to events induced by forecasts and, more generally, that forecasts are amenable to statistical analysis. 

This paper studies when an ideal point forecast based on $\F$, or more generally, a forecast which arises by applying the functional to a probabilistic forecast based on $\F$, is indeed $\F$-measurable. To this end, the appropriate notion of measurability of the target functional $T$, viewed as a map from a class of probability measures to the reals, is clarified. Moreover, we establish corresponding measurability results for many practically relevant examples, including the class of elicitable ones, which can be written as the minimiser of an expected loss function (Theorem \ref{thm:main}).
Our results provide a formal justification for the $\F$-measurability of ideal $\F$-based point forecasts which seems to have been taken for granted in large parts of the forecast evaluation literate, sometimes tacitly, sometimes with somewhat incomplete arguments, or which has been established under restrictive assumptions \citep{Weber2006, HolzmannEulert2014, NoldeZiegel2017, Pohle2020, FisslerHoga2021, GneitingResin2021, FFHR2021, HogaDimitriadis2021}.
From a regression or machine learning perspective where $Y$ is observed together with explanatory variables $X$, our results justify the measurability of the oracle regression function $x\mapsto T(Y|X=x)$ beyond the classical situation of mean regression to quantile regression \citep{Koenker2005}, in expectile regression \citep{NeweyPowell1987} or in Expected Shortfall regression \citep{DimiBayer2019}.
In the field of sensitivity analysis, measuring the information content with score improvements also calls for the measurability results established in our paper \citep{BorgonovoETAL2021, FisslerPesenti2022}.

In Section \ref{sec:constscore}, we provide the formal definitions and concepts of the article. The appropriate concept of measurability of functionals is discussed in Section \ref{sec:mesurability of stat func} which leads to the $\F$-measurability of the ideal $\F$-based forecast. 
We show measurability of important functionals such as moments or weighted quantiles and discuss the connection to continuity and robustness results discussed in statistics and finance. Measurability of elicitable functionals is discussed and established in Section \ref{sec:elicitable}.
Finally, we provide sufficient conditions for the measurability of scoring rules in Section \ref{sec:scoring rules}. They are used to measure the predictive accuracy of probabilistic forecasts, and  
map a pair of a predictive distribution $\rP$ and an observation $y$ to the real number $\rS(\rP,y)$. Hence, technically speaking, for fixed $y$ they constitute a real-valued statistical functional, which aligns them with the main topic of this paper. 
Regarding the application, the measurability of scoring rules formally justifies common statistical practices such as computing Diebold--Mariano tests which test predictive dominance \citep{DieboldMariano1995}.

\section{Statistical functionals and forecasting}
\label{sec:constscore}

Real-valued statistical functionals map probability measures on the  Borel-$\sigma$-algebra of events of the observation domain to real numbers. More formally, the \emph{observation domain} $\odom \subseteq \R^d$ is a  Borel-measurable subset of a Euclidean space $\R^d$, where we will mainly consider $d=1$,  with \emph{Borel-$\sigma$-algebra} $\sodom$ of $\odom$. A statistical functional on some family $\cP$ of (Borel-)probability measures on $(\odom, \sodom)$ is a map $T: \cP \to \R$ from $\cP$ to the real numbers.

A prediction space setting \citep{GneitingRanjan2013} consists of a probability space $(\Omega, \Aa, \Pb)$ together with a random variable $Y : \Omega \to \odom$, the response variable modelling the quantity of interest, and a sub-$\sigma$-algebra $\F$ of $\Aa$, the information set on which the forecast is based. $\F$ can be generated, for example, by observable explanatory variables, often called, regressors or features, which can also contain past observations of $Y$ in a time series setting.
Then, the conditional distribution $\Pb_{Y | \F}$ of $Y$ given $\F$ is the \emph{ideal probabilistic} forecast. If a real-valued quantity shall be issued as a forecast, it is essential to specify which statistical functional $T$ of the conditional distribution $\Pb_{Y | \F}$ is the target.   

A general forecast for $T$ of $\Pb_{Y | \F}$ is an $\F$-measurable random variable, and the ideal forecast is $T(Y|\F) : = T\big(\Pb_{Y | \F} \big)$. For $T(Y|\F)$ to be defined, we first require that conditional distributions in $\Pb_{Y | \F}$ are contained in $\cP$, the domain of $T$. Second, we require the $\F$-measurability of the forecast $T(Y|\F)$.
Our aim is to provide conditions on $T$ which guarantee that, when well-defined, $T(Y|\F)$ is automatically a forecast, that is, $\F$-measurable, and to show that these conditions are satisfied by essentially all statistical functionals which arise in applications. 

To proceed, let us recall the formal definition of the conditional distribution $\Pb_{Y | \F}$. A \emph{Markov kernel} from $(\Omega, \F)$ to $(\odom, \sodom)$ is a map $\kappa: \Omega \times \sodom \to [0,1]$ such that 
\begin{enumerate}
	\item
	for each $\omega \in \Omega$, the map $B \mapsto \kappa(\omega, B)$, $B \in \sodom$, is a probability measure on $(\odom, \sodom)$,
	\item
	for each $B \in \sodom$, the map $\omega \mapsto \kappa(\omega, B)$, $\omega \in \Omega$, is $\F - \B[0,1]$-measurable. 
\end{enumerate}

A \emph{regular version} of the conditional probability of $Y$ given $\F$, denoted by $\Pb_{Y | \F}$, is a Markov kernel from $(\Omega, \F)$ to $(\odom, \sodom)$ such that for each $F \in \F$ and $B \in \sodom$, 
\[ \Pb\big(F \cap \{ Y \in B\} \big) = \int_F \Pb_{Y | \F}(\omega; B)\,  \dd \Pb(\omega).\]
Recall that $\Pb_{Y | \F}$ is unique only up to almost sure equality.

When forecasting the functional $T$ for $Y$ based on $\F$, we aim at the map
\begin{equation}\label{eq:functionalcond}
	\omega \mapsto T\big(\Pb_{Y | \F}(\omega;\cdot) \big) = : T(Y|\F)(\omega), \qquad \omega \in \Omega,
\end{equation} 
and we shall assume that the version $\Pb_{Y | \F}$ can be and is chosen such that $\Pb_{Y | \F}(\omega;\cdot) \in \cP$ for all $\omega \in \Omega$. If $\kappa$ is a probabilistic forecast based on $\F$, that is, a Markov kernel from $(\Omega, \F)$ to $(\odom, \sodom)$ for which $\kappa(\omega, \cdot) \in \cP$ for $\omega \in \Omega$, we may form the resulting point forecast by
$$\omega \mapsto T\big(\kappa(\omega;\cdot) \big), \qquad \omega \in \Omega.$$
To show that $T(Y|\F)$ or, more generally, that $T\big(\kappa(\cdot;\cdot) \big)$ is a forecast, that is  $\F$-measurable, it suffices to find a $\sigma$-algebra $\Aa(\cP)$ such that (a) $\Pb_{Y | \F}$  or $\kappa$, considered as a map from $\Omega$ to $\cP$, is $\F - \Aa(\cP)$-measurable, and (b) to show that $T : \cP \to \R$ is $\Aa(\cP) - \B$-measurable, where $\B$ is the Borel-$\sigma$-algebra of $\R$.

\section{Measurability of functionals}
\label{sec:mesurability of stat func}

We shall choose $\Aa(\cP)$ as the projection $\sigma$-algebra. More formally, let 
$\cQ$ be the family of all probability measures $\rP$ on $(\odom, \sodom)$, and 
for each $B \in \sodom$, consider the evaluation map 
\[ \pi_B(\rP) = \rP(B), \qquad \rP \in \cQ.\]
Then, on $\cQ$ we can consider the smallest $\sigma$-algebra $\Aa(\cQ)$ which makes all evaluation maps $\Aa(\cQ) - \B[0,1]$-measurable. For a subset $\cP \subseteq \cQ$ we denote by $\Aa(\cP) = \{ \cP \cap A \mid A \in \Aa(\cQ)\}$ the trace $\sigma$-algebra.
Note that $\Aa(\cP)$ is also the $\sigma$-algebra generated by the restrictions of the evaluation maps $\pi_B$ to $\cP$, which follows from the simple fact that for any $C\in \B[0,1]$ and any $B\in \sodom$ we have $\pi_{B|\cP}^{-1}(C)  = \cP \cap \pi_B^{-1}(C)$.
Moreover, $\Aa(\cP)$ can be regarded as the Borel-$\sigma$-algebra induced by the topology of setwise convergence.
%
\begin{lemma}
\label{lemma:Markov kernels}
	Let $\kappa: \Omega \times \sodom \to [0,1]$ be a map such that for each $\omega\in \Omega$, $\kappa(\omega, \cdot) \in \cP$. Then $\kappa$ is a Markov kernel from $(\Omega, \F)$ to $(\odom, \sodom)$ if and only if the map $\omega \to \kappa(\omega, \cdot)$ from $\Omega$ to $\cP$ is $\F - \Aa(\cP)$-measurable. 
\end{lemma}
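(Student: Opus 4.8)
The plan is to derive the equivalence from the universal property of the projection $\sigma$-algebra $\Aa(\cP)$, i.e.\ from the standard criterion for measurability of a map into a space whose $\sigma$-algebra is generated by a family of maps. First I would dispense with condition (i) in the definition of a Markov kernel: since by hypothesis $\kappa(\omega,\cdot)\in\cP\subseteq\cQ$ for every $\omega\in\Omega$, the set function $B\mapsto\kappa(\omega,B)$ is automatically a probability measure on $(\odom,\sodom)$, so condition (i) holds trivially and carries no information. Hence the statement reduces to showing that condition (ii) --- that $\omega\mapsto\kappa(\omega,B)$ is $\F-\B[0,1]$-measurable for every $B\in\sodom$ --- is equivalent to the $\F-\Aa(\cP)$-measurability of the map $g\colon\omega\mapsto\kappa(\omega,\cdot)$.

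Next I would invoke the fact, recalled immediately before the lemma, that $\Aa(\cP)$ is generated by the restricted evaluation maps $\pi_{B|\cP}$, $B\in\sodom$, together with the general measurability criterion: if $\Aa(\cP)=\sigma\big(\pi_{B|\cP}\colon B\in\sodom\big)$, then $g\colon(\Omega,\F)\to(\cP,\Aa(\cP))$ is measurable if and only if $\pi_{B|\cP}\circ g$ is $\F-\B[0,1]$-measurable for each $B\in\sodom$. Since $(\pi_{B|\cP}\circ g)(\omega)=\pi_B\big(\kappa(\omega,\cdot)\big)=\kappa(\omega,B)$, the right-hand side of this criterion is precisely condition (ii), which closes the argument.

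For completeness I would spell out the short proof of the criterion in this concrete setting. The ``only if'' direction is immediate, as $\pi_{B|\cP}$ is $\Aa(\cP)-\B[0,1]$-measurable by construction and composition preserves measurability. For the ``if'' direction I would consider $\mathcal{D}=\{A\in\Aa(\cP)\colon g^{-1}(A)\in\F\}$; since preimages commute with complements and countable unions, $\mathcal{D}$ is a $\sigma$-algebra, and the assumed measurability of each $\pi_{B|\cP}\circ g$ shows that $\mathcal{D}$ contains every set $\pi_{B|\cP}^{-1}(C)=\cP\cap\pi_B^{-1}(C)$ with $C\in\B[0,1]$. These sets generate $\Aa(\cP)$, so $\mathcal{D}=\Aa(\cP)$, i.e.\ $g$ is $\F-\Aa(\cP)$-measurable.

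There is essentially no genuine obstacle here: the only point requiring care is the identification of $\Aa(\cP)$ with the $\sigma$-algebra generated by the evaluation maps $\pi_{B|\cP}$, and this has already been established in the preceding discussion. The remainder is a routine application of the generating-family criterion, with the apparent asymmetry between conditions (i) and (ii) of the Markov-kernel definition resolved entirely by the standing hypothesis $\kappa(\omega,\cdot)\in\cP$.
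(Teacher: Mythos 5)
Your proposal is correct and follows essentially the same route as the paper's proof: condition (i) of the Markov-kernel definition is automatic from the hypothesis $\kappa(\omega,\cdot)\in\cP$, and condition (ii) is equivalent to $\F-\Aa(\cP)$-measurability via the generating-family criterion applied to the evaluation maps. The paper states this in one line; you merely spell out the routine details of the criterion.
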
 	
\begin{proof}
	$\kappa$ is a Markov kernel if and only if for each $B \in \sodom$, the map $\omega \to \kappa(\omega, B) = \pi_B(\kappa(\omega, \cdot))$ is $\F - \B[0,1]$-measurable, which is the case if and only if $\omega \to \kappa(\omega, \cdot)$ is $\F - \Aa(\cP)$-measurable. 
\end{proof}

Concerning the measurability of the forecasts $T(Y|\F)$ of functionals $T$, we observe the following. 
\begin{lemma}
\label{lemma:ideal forecast}
	If the functional $T: \cP \to \R$ is $\Aa(\cP) - \B$-measurable, then $T(Y|\F)$ defined in \eqref{eq:functionalcond} is $\F - \B$-measurable. 
\end{lemma}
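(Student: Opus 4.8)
The plan is to reduce the claim to the elementary fact that a composition of measurable maps is measurable, with the two factors being supplied exactly by the preceding lemma and the hypothesis. Concretely, I would view $T(Y|\F)$ as the concatenation of two maps: first the map $\iota\colon \Omega \to \cP$, $\omega \mapsto \Pb_{Y|\F}(\omega;\cdot)$, and then the functional $T\colon \cP \to \R$. Formula \eqref{eq:functionalcond} is precisely $T(Y|\F) = T \circ \iota$, so it suffices to check that each factor is measurable with respect to the appropriate $\sigma$-algebras, namely that $\iota$ is $\F - \Aa(\cP)$-measurable and $T$ is $\Aa(\cP) - \B$-measurable.

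For the first factor, I would invoke Lemma \ref{lemma:Markov kernels}. By the standing assumption made right after \eqref{eq:functionalcond}, the regular version $\Pb_{Y|\F}$ is chosen so that $\Pb_{Y|\F}(\omega;\cdot) \in \cP$ for every $\omega \in \Omega$, and by definition of a regular conditional distribution it is a Markov kernel from $(\Omega,\F)$ to $(\odom,\sodom)$. Lemma \ref{lemma:Markov kernels} then yields that $\iota$ is $\F - \Aa(\cP)$-measurable. The second factor is measurable by hypothesis. Composing, $T(Y|\F) = T \circ \iota$ is $\F - \B$-measurable, which is the assertion.

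I would then add one sentence noting that the identical argument, with $\iota$ replaced by $\omega \mapsto \kappa(\omega;\cdot)$ for any probabilistic forecast $\kappa$ based on $\F$ taking values in $\cP$, shows that the induced point forecast $\omega \mapsto T(\kappa(\omega;\cdot))$ is likewise $\F - \B$-measurable, since Lemma \ref{lemma:Markov kernels} applies verbatim to such $\kappa$. There is essentially no obstacle here: the content of the statement has been front-loaded into Lemma \ref{lemma:Markov kernels} (the equivalence of the Markov-kernel property with $\F - \Aa(\cP)$-measurability) and into the definition of the projection $\sigma$-algebra $\Aa(\cP)$, so the present lemma is just the bookkeeping step that chains these together via composition. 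The only point worth stating explicitly is that the well-definedness caveat — that the chosen version of $\Pb_{Y|\F}$ lands in $\cP$ — is exactly what makes $\iota$ a map into $\cP$ in the first place, so that $T \circ \iota$ makes sense.
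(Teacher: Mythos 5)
Your proposal is correct and coincides with the paper's own argument: the proof given there likewise writes $T(Y|\F)$ as the composition of the map $\omega\mapsto \Pb_{Y|\F}(\omega;\cdot)$, which is $\F-\Aa(\cP)$-measurable by Lemma \ref{lemma:Markov kernels}, with the $\Aa(\cP)-\B$-measurable functional $T$. Your closing remark about general Markov kernels $\kappa$ also matches the observation the paper makes immediately after the lemma.
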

\begin{proof}
This is clear since by Lemma \ref{lemma:Markov kernels}, the conditional distribution $\Pb_{Y | \F}$ is $\F - \Aa(\cP)$-measurable as a map from $\Omega$ to $\cP$, and $T$ is $\Aa(\cP) - \B$-measurable by assumption. 
\end{proof}
Similarly to Lemma \ref{lemma:ideal forecast}, the general forecast $\omega \mapsto T\big(\kappa(\omega;\cdot) \big)$, $\omega \in \Omega$, where $\kappa$ is a Markov kernel from $(\Omega, \F)$ to $(\odom, \sodom)$, is $\F - \B$-measurable.
Thus, to clarify the measurability of $T\big(\kappa(\cdot;\cdot) \big)$ and of $T(Y|\F)$ in \eqref{eq:functionalcond} it suffices to investigate $T$. 
Before we turn to a general result for elicitable functionals, let us investigate several important examples. 
In the following we identify any probability distribution $\rP\in\cQ$ with its distribution function $F = F_{\rP}$ defined by $F(x) = \rP(\odom \cap (-\infty, x])$, $x \in \odom$.

\begin{example}[Moments]\label{ex:moments}
Let $h: \odom \to \R$ be a measurably function. 
Let $\cP_h$ be a family of probability measures on $(\odom, \sodom)$ satisfying $\int_\odom \, |h(y)|\, \dd \rP(y) < \infty$, and consider the mean functional
\[ T_h(\rP) = \int_\odom \, h(y)\, \dd \rP(y), \qquad \rP\in\cP_h.\]
Then $T_h$ is $\Aa(\cP_h)-\B$-measurable.

Indeed, if $h(y) = \ind_B(y)$ for some $B \in \sodom$, then $T_h = \pi_B$ and the claim follows from the measurability of the evaluation map.
If $h$ is a simple function, the resulting functional will be a finite linear combination of evaluation maps, and hence also measurable. For $h\geq 0$, there is a sequence of non-negative simple functions $(h_n)$ with $h_n \uparrow h$ pointwise, and then $T_{h_n}(\rP) \uparrow  T_{h}(\rP)$ for every $\rP$. Hence, as a pointwise limit of the measurable functions $T_{h_n}$, $T_h$ is also measurable. For a general $h$, the measurability follows by the decomposition $T_h = T_{h^+} - T_{h^-}$. 
\end{example}

Example \ref{ex:moments} directly yields the measurability of Borel-measurable functions of multiple moments. This yields, for example, the measurability of the variance, the skewness, the kurtosis, or of the Sharpe ratio.

\begin{example}[Quantiles]\label{ex:quantiles}
	Let $\odom = \R$, 
	fix $\alpha \in [0,1]$ and let 
	\begin{align*}
	T(F) = q_\alpha^-(F)
	& = \inf\{ x \in \R \mid F(x) \geq \alpha\} \in \bar \R = \R\cup\{-\infty, \infty\}, \qquad F \in \cP,
	\end{align*}
	be the lower $\alpha$-quantile of $F$, which is its essential supremum if $\alpha=1$.
	Recall that $ \B(\bar \R) = \{A\cup E\mid A\in\B(\R), \ E\subseteq \{-\infty, \infty\}\}$. 
	
	To show that $T$ is $\Aa(\cP) - \B(\bar \R)$-measurable,
	consider the family of evaluation maps $\pi_{(-\infty,x]}(F) = F(x)$, for $F \in \cP$, $x \in \R$. Each $\pi_{(-\infty,x]}$ is $\Aa(\cP) - \B([0,1])$-measurable. 

	Now, given $p \in \R$, the lower $\alpha$-quantile $q_\alpha^-(F)$ of $F$ is strictly larger than $p$, $q_\alpha^-(F) > p$, if and only if $\alpha > F(p)$. Therefore 
	\begin{align*}
		\{F \in \cP \mid  q_\alpha^-(F) > p\} & = \{F \in \cP  \mid  \pi_{(-\infty,p]}(F)  < \alpha\},
	\end{align*}
	showing measurability of the lower $\alpha$-quantile, where we exploit the fact that it suffices to show measurability on a generator of $\B(\bar \R)$.
	
	Similar considerations yield the $\Aa(\cP) - \B(\bar \R)$-measurability of the upper $\alpha$-quantile, $\alpha\in[0,1]$,
	\begin{align*}
	T(F) = q_\alpha^+(F) 
	& = \sup\{ x \in \R \mid \lim_{t\uparrow x} F(t) \leq \alpha\} \in \bar \R, \qquad F \in \cP,
	\end{align*}
	where $q_0^+(F)$ corresponds to the essential infimum of $F$.
	For $\alpha\in(0,1)$, we can also consider the $\alpha$-quantile, $q_\alpha$, as an interval-valued functional $q_\alpha(F) = [q_\alpha^-(F), q_\alpha^+(F)]\subset \R$. Then, the previously established measurability results yield that $q_\alpha$ is an \emph{Effros measurable} closed-valued multifunction in the sense that for all open sets $G\subseteq\R$, $\{F\in\cP\mid q_\alpha(F)\cap G \neq \emptyset\}\in \Aa(\cP)$, see Definition 1.3.1 in \cite{Molchanov2017}. Therefore, the ideal forecast $q_\alpha(Y|\F)$ is a random closed set in the sense of Definition 1.1.1 in \cite{Molchanov2017}.
	This complements the measurability argument of the quantile provided recently in \cite{CastroETAL2021}.
\end{example}
In the risk management literature, the lower $\alpha$-quantile is known as Value at Risk at level $\alpha$, $\VaR_\alpha$.
The following example is concerned with another important quantitative risk measure, the Expected Shortfall.

\begin{example}[Weighted averages of quantiles]\label{ex:average quantiles}
Let $w\colon[0,1]\to[0,\infty)$ be a measurable weight function such that the left-sided and the right-sided limits exists for all $x\in[0,1]$. 
Consider the functional 
\begin{equation*}
T_w(F) = \int_0^1 q_\gamma^-(F)w(\gamma)\, \dd \gamma \in \bar \R, \qquad F\in \cP_w,
\end{equation*}
where $\cP_w$ is the domain such that $T_w$ is well-defined. The measurability of $q_\gamma^-$ from Example \ref{ex:quantiles} and a Riemann approximation argument yield that $T_w$ is $\Aa(\cP_w) - \B(\bar \R)$-measurable.

This directly yields the measurability of the lower and upper Expected Shortfall (by choosing $w = \ind_{[0,\alpha]}/\alpha$ or $w = \ind_{[\alpha,1]}/(1-\alpha)$) and of the Range Value at Risk \citep{ContDeguestETAL2010, FisslerZiegel2021} (by choosing $w= \ind_{[\alpha,\beta]}/(\beta - \alpha)$ for $0<\alpha<\beta<1$).
\end{example}
To treat the next Example \ref{ex:CoVaR}, we first need to establish the following technical lemma.
\begin{lemma}\label{lem:technical}
Let $T\colon \cP\to\R$ be an $\Aa(\cP) - \B(\R)$-measurable functional. Then the composed evaluation maps $\cP\to[0,1]$,
\begin{equation}\label{eq:composed}
F\mapsto \pi_{(T(F),\infty)}(F), 
\qquad F\mapsto \pi_{(-\infty,T(F))}(F), 
\qquad F\mapsto \pi_{\{T(F)\}}(F)
\end{equation}
are $\Aa(\cP) - \B([0,1])$-measurable.
\end{lemma}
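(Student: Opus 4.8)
The plan is to reduce each of the three maps in \eqref{eq:composed} to a composition of maps that are already known to be measurable, using the product structure $\cP\times\R$ and the joint measurability of the evaluation maps in their two arguments. First I would consider the auxiliary map $\Phi\colon \cP\to\cP\times\R$ given by $\Phi(F)=(F,T(F))$. Since $T$ is $\Aa(\cP)-\B(\R)$-measurable by hypothesis and the identity on $\cP$ is trivially $\Aa(\cP)-\Aa(\cP)$-measurable, $\Phi$ is $\Aa(\cP)-\big(\Aa(\cP)\otimes\B(\R)\big)$-measurable. Hence it suffices to show that the three maps
\[
(F,t)\mapsto \pi_{(t,\infty)}(F)=1-F(t),\qquad (F,t)\mapsto \pi_{(-\infty,t)}(F)=\lim_{s\uparrow t}F(s),\qquad (F,t)\mapsto \pi_{\{t\}}(F)=F(t)-\lim_{s\uparrow t}F(s)
\]
are $\big(\Aa(\cP)\otimes\B(\R)\big)-\B([0,1])$-measurable; composing with $\Phi$ then yields the claim. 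Note the third map is the difference of (a constant minus) the first and the second, so it suffices to treat the first two.

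For the map $(F,t)\mapsto F(t)=\pi_{(-\infty,t]}(F)$, I would approximate $t$ from the right by rationals: $F(t)=\inf_{q\in\Q,\,q>t}F(q)=\lim_{n\to\infty}F(t_n)$ where $t_n\downarrow t$ along rationals, using right-continuity of distribution functions. For each fixed rational $q$, the map $(F,t)\mapsto F(q)\,\ind\{q>t\}$ is measurable because $F\mapsto F(q)=\pi_{(-\infty,q]}(F)$ is $\Aa(\cP)$-measurable and $t\mapsto\ind\{q>t\}$ is $\B(\R)$-measurable, so the product is $\Aa(\cP)\otimes\B(\R)$-measurable; assembling these over a countable dense set and passing to the pointwise limit gives joint measurability of $(F,t)\mapsto F(t)$. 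An entirely analogous argument with $q<t$ and $q\uparrow t$ handles $(F,t)\mapsto\lim_{s\uparrow t}F(s)=\sup_{q\in\Q,\,q<t}F(q)$. Consequently $(F,t)\mapsto \pi_{(t,\infty)}(F)=1-F(t)$, $(F,t)\mapsto\pi_{(-\infty,t)}(F)$, and their combination $(F,t)\mapsto\pi_{\{t\}}(F)$ are all jointly measurable, and precomposing with $\Phi$ finishes the proof.

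The main obstacle I anticipate is establishing the \emph{joint} measurability of $(F,t)\mapsto F(t)$ in the two variables simultaneously; measurability in $F$ alone (for fixed $t$) is immediate from the definition of $\Aa(\cP)$, and measurability in $t$ alone (for fixed $F$) is immediate from monotonicity, but the joint statement is what the countable-approximation step above is designed to deliver. Once joint measurability of the distribution function and of its left-hand limit is in hand, everything else is the routine bookkeeping of composing with $\Phi$ and taking finite differences, which I would not spell out in detail.
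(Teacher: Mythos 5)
Your argument is correct, but it takes a genuinely different route from the paper's. The paper works sublevel set by sublevel set, using the duality between distribution functions and quantiles: $\{F \mid \pi_{(T(F),\infty)}(F)\le p\} = \{F\mid F(T(F))\ge 1-p\} = \{F\mid (T - q_{1-p}^-)(F)\ge 0\}$, and then invokes the $\Aa(\cP)$-measurability of $q_{1-p}^-$ (respectively $q_p^+$ for the second map) already established in Example \ref{ex:quantiles}. You instead factor each map through $\Phi(F)=(F,T(F))$ and prove \emph{joint} measurability of $(F,t)\mapsto F(t)$ and $(F,t)\mapsto F(t-)$ on $\big(\cP\times\R,\ \Aa(\cP)\otimes\B(\R)\big)$ using right-continuity and a countable approximation---essentially the same Carath\'eodory-type argument the paper uses later for scoring rules in Lemma \ref{lem:measscore}. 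Both approaches are sound. Yours buys a reusable general principle (measurability of $F\mapsto \pi_{(-\infty,g(F)]}(F)$ for \emph{any} measurable $g\colon\cP\to\R$, without reference to quantiles) at the cost of setting up the product-space machinery; the paper's is shorter because quantile measurability is already on the table. One detail to tighten in your write-up: in $F(t)=\inf_{q\in\Q,\,q>t}F(q)$ the index set depends on $t$, so taking the infimum of your building blocks $F(q)\,\ind\{q>t\}$ over \emph{all} rationals $q$ would wrongly pick up the value $0$ from the terms with $q\le t$; write the infimum instead as $\inf_{q\in\Q}\big(F(q)\,\ind\{q>t\}+\ind\{q\le t\}\big)$, or use the dyadic-grid sums as in the proof of Lemma \ref{lem:measscore}. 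The analogous supremum $\sup_{q\in\Q}F(q)\,\ind\{q<t\}=F(t-)$ needs no such correction since the spurious terms contribute $0$, which never exceeds the supremum.
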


\begin{proof}
For the first map in \eqref{eq:composed}, let $p\in[0,1]$ and consider
\begin{align*}
\{F\in \cP\mid \pi_{(T(F),\infty)}(F) \le p\}
&= \{F\in \cP\mid F(T(F))\ge 1- p\} 
=  \{F\in \cP\mid T(F)\ge q_{1- p}^-(F)\} \\
&=  \{F\in \cP\mid (T -q_{1- p}^-)(F) \ge 0\}
\in \Aa(\cP).
\end{align*}
The last assertion comes from the fact that $q_{1-p}^-$ is $\Aa(\cP) - \B(\R)$-measurable and that differences of measurable functionals are also measurable.\\
For the second map in \eqref{eq:composed}, the arguments are similar. Using the shorthand $F(x-) := \lim_{t\uparrow x}F(t)$ for the left-sided limit, we obtain for $p\in[0,1]$
\begin{align*}
\{F\in \cP\mid \pi_{(-\infty,T(F))}(F) \le p\}
&= \{F\in \cP\mid F(T(F)-)\le p\} 
= \{F\in \cP\mid T(F)\le q_p^+(F) \} \\
&=  \{F\in \cP\mid (T - q_p^+)(F)\le 0 \} \in \Aa(\cP).
\end{align*}
Finally, note that $\pi_{\{T(F)\}}(F) = 1-\pi_{(T(F),\infty)}(F) - \pi_{(-\infty,T(F))}(F)$.
\end{proof}

\begin{example}\label{ex:CoVaR}
Conditional Value at Risk (CoVaR) and conditional Expected Shortfall (CoES) are influential systemic risk measures due to \cite{AB16}. 
In a nutshell, they assess the riskiness of a position $Y$ given that a reference position $X$ is at risk. Following \cite{GT13}, the latter conditioning event is interpreted as $X$ exceeding its Value at Risk at level $\beta$, $\VaR_\beta$. 
Hence, these risk measures can be viewed as functionals, first mapping a bivariate distribution $\rP_{X,Y}$ to the univariate conditional distribution $\rP_{Y\mid X \ge \VaR_{\beta}(X)}$ and then applying a risk measure such as $\VaR_{\alpha}$ (in the case of CoVaR) or $\ES_\alpha$ (in the case of CoES) to this univariate distribution. 
Since $\VaR_{\alpha}$ and $\ES_\alpha$ are measurable functionals from univariate distributions to $\R$, we obtain the measurability of these conditional risk measures if the map $\rP_{X,Y}\mapsto \rP_{Y\mid X \ge \VaR_{\beta}(X)}$ is measurable. 
To account for possible discontinuities in the marginal distribution of the first component $X$, we follow Remark C.1 in \cite{FisslerHoga2021} and incorporate a correction term to this map.
Hence, we formally consider
$\eta\colon \cP^2 \to \cP$, where $\cP^2$ is a set of Borel-distributions on $\R^2$ and $\cP$ is a set of such distributions on $\R$, such that for a Borel set $B\in\B(\R)$ we get  
\begin{align*}
&\eta(\rP_{X,Y})(B)=
\rP_{Y\mid X \succcurlyeq \VaR_\beta(X)}(B)\\ \nonumber
&:=
\frac{1}{1-\beta}\Big[\rP_{X,Y}\big((\VaR_{\beta}(\rP_X),\infty) \times B\big) \\ \nonumber
&\qquad + \Big(1-\beta - \rP_X \big((\VaR_{\beta}(\rP_X),\infty)\big) \Big)\rP_{X,Y}\big(\{\VaR_{\beta}(\rP_X)\} \times B\big)/\rP_X \big(\{\VaR_{\beta}(\rP_X)\}\big)\Big]
\end{align*}
For the last term, note that $\rP_X \big(\{\VaR_{\beta}(\rP_X)\}\big)=0$ implies that $1-\beta - \rP_X \big((\VaR_{\beta}(\rP_X),\infty)\big)=0$ for which case we set $0/0=0$. 
To show the $\Aa(\cP^2)-\Aa(\cP)$-measurability of $\eta$ it is sufficient to show that for any $B\in\B(\R)$, the concatenation with the evaluation map $\pi_B\circ \eta$ is $\Aa(\cP^2)-\B([0,1])$-measurable.
This measurability follows from Lemma \ref{lem:technical} and the fact that for any $C\in\B(\R)$ the map $\rP_{X,Y}\mapsto \rP(\cdot \times C)$ is $\Aa(\cP^2)-\Aa(\cP)$-measurable.
\end{example}

\begin{remark}[Continuity of statistical functionals]
	Apart from mere measurability, continuity or even differentiability properties of statistical functionals are of interest in statistics, finance and econometrics. A statistical functional is weakly continuous at $\rP \in \cP$ if the weak convergence $\rP_n \Rightarrow \rP$ of a sequence $(\rP_n)$ in $\cP$ to $\rP$ implies that $T(\rP_n) \to T(\rP)$. Let $\B(\cP)$ denote the Borel-$\sigma$-algebra generated by the weak topology on $\cP$. 
	If a statistical functional $T$ is weakly continuous at every $\rP\in \cP$ it is also $\B(\cP)$-measurable. 
	Since $\B(\cP) \subseteq \Aa(\cP) $, this readily implies  that $T$ is also measurable in the sense defined above. 
	However, apart from the Range Value at Risk the functionals discussed above are known not to be continuous at every $\rP$ under the general assumptions that we impose: For the mean, the discontinuity follows from \citet[Lemma 2.1]{HuberRonchetti2009}, while the lower $\alpha$-quantile, $\alpha\in(0,1)$, is weakly continuous at $F$ if and only if the quantile function $\gamma\mapsto q_\gamma^-(F)$ is continuous at $\alpha$, that is, if and only if $q_\alpha^-(F)=q_\alpha^+(F)$; see \citet[Lemma 21.2]{VanderVaart1998}.

	In the finance literature, statistical functionals arise from law-determined (often called law-invariant) risk measures and are often called risk functionals \citep{KratschmeSchiedETAL2014}. Assuming monotonicity with respect to first order stochastic dominance and translation equivariance, \citet[Lemma 2.1 and Corollary 2.1]{Weber2006} establishes the weak continuity of risk functionals on the class of probability distributions with compact support, implying the measurability. 
	While this applies to quantiles and weighted averages of quantiles with weight function $w$ satisfying $\int w(\gamma)\, \dd \gamma=1$, we do not require compactly supported distributions.\\	
	Generalizing \cite{Weber2006}, \cite{KratschmeSchiedETAL2014} show that monotone and translation equivariant risk functionals arising from convex risk measures on $\cP\subseteq \cQ^p = \{\rP\in\cQ \mid \int |x|^p \, \dd \rP(x)\}$ are continuous in the Wasserstein metric of order $p\in[1,\infty)$. 
	Similarly, \cite{KieselETAL2016} show the continuity of $L$-functionals in the Wasserstein metric of order $p\in[1,\infty)$. 
	Since a sequence of probability measures $(\rP_n)$ in $\cP$ converges to $\rP$ in the Wasserstein metric of order $p$ if and only if it converges weakly and $\int |x|^p \, \dd \rP_n(x) \to \int |x|^p \, \dd \rP(x)$, the continuity in the Wasserstein metric of order $p$ implies $\Aa(\cP)$-measurability. 
	This yields an alternative argument for the $\Aa(\cP)$-measurability of the lower and upper expected shortfall.
\end{remark}

\section{Elicitable functionals}
\label{sec:elicitable}

Let us now turn to general elicitable functionals. These are functionals such that the ideal forecast corresponds to a Bayes act in the sense that it minimizes an expected loss function. 
Again, let $\odom \subseteq \R$ be a measurable subset, let $\adom$ be an interval, and let $\cP$ be a family of Borel probability measures on $(\odom, \sodom)$.  
Let 
\[ 
\los : \adom \times \odom \to \R
\] 
be a scoring or loss function with the following properties:

\begin{enumerate}
	\item For each $a \in \adom$ and $\rP \in \cP$, $\los(a,\cdot)$ is $\rP$-integrable. Set
	\[ \blos(a,\rP) = \int_\odom \los(a,y)\, \dd \rP(y).\]
	We also write $\blos(a,F)$ if $F$ is a distribution function of a distribution $\rP$ in $\cP$. 
	\item For each $F \in \cP$, the function $a \mapsto \blos(a,F)$, $a \in \adom$, 
	\begin{enumerate}
		\item is continuous,
		\item has a compact, non-empty interval of minimizers, denoted by $I_\los(F)$,
		\item is monotonically decreasing to the left of $I_\los(F)$, and monotonically increasing to the right. 
	\end{enumerate}
\end{enumerate}		 
Then $\los$ is \emph{$\cP$-consistent} for the functionals $T_{\min}(F) = \min I_\los(F)$ and $T_{\max}(F) = \max I_\los(F)$ meaning that $T_{\min}(F)$ and $T_{\max}(F)$ minimize the expected loss function $a\mapsto \blos (a,F)$ for each $F\in\cP$. 
If $I_\los(F)$ is a singleton for each $F \in \cP$, then $T_{\min} = T_{\max}=:T$, then $\los$ is \emph{strictly $\cP$-consistent} for $T$, meaning that $T$ uniquely minimizes the expected loss.
A functional is \emph{elicitable} on $\cP$ if there exists a strictly $\cP$-consistent scoring function for it.

If the class $\cP$ is convex and if the continuity assumption a) on the expected loss holds, then the strict $\cP$-consistency of $\los$ for $T_{\min} = T_{\max}$ already implies properties b) and c) as shown in \citet[Proposition 2.2]{FisslerZiegel2019}, \citet[Proposition 3]{Nau1985}, \citet[Proposition 3.4]{BelliniBignozzi2015}, \citet[Proposition 11]{Lambert2019}; see also \citet[Theorem 5]{SteinwartPasinETAL2014} for a related result.
Properties b) and c) are known as \emph{order-sensitivity} or \emph{accuracy-rewarding} in the literature; see \cite{LambertETAL2008} and the references above.
As noted in the discussion below Proposition 2.2 in \cite{FisslerZiegel2019}, a sufficient condition for the continuity of the expected loss $a\mapsto \blos(a,F)$ for each $F\in\cP$ are conditions a), b) and c) on the level of the loss $a\mapsto \los(a,y)$ itself for each $y\in\odom$.
The continuity of $\los(a,y)$ in its first argument is a common regularity condition; see e.g.~\cite{Gneiting2011}.

Define the entropy, uncertainty, or Bayes risk of the $\cP$-consistent loss $\los$ for $T$ as $T_{\ent}(F) = \blos\big(T(F),F\big)$, $F\in\cP$.

\begin{theorem}\label{thm:main}
	Under the above assumptions, the functionals $T_{\min}$, $T_{\max}$ and $T_{\ent}$ are $\Aa(\cP) - \B$-measurable. 
\end{theorem}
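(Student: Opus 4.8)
The starting point is Example~\ref{ex:moments}: for every fixed $a\in\adom$ the map $\Phi_a\colon\cP\to\R$, $\Phi_a(F)=\blos(a,F)=\int_\odom\los(a,y)\,\dd\rP(y)$, is $\Aa(\cP)-\B$-measurable, being the mean functional attached to the measurable function $h=\los(a,\cdot)$ on the domain $\cP\subseteq\cP_h$. Hence for any $a,b\in\adom$ the difference $F\mapsto\Phi_a(F)-\Phi_b(F)$ is $\Aa(\cP)-\B$-measurable as well. If $\adom$ is a single point the assertion is trivial (all three functionals are then constant, resp.\ of moment type), so I assume $\adom$ is non-degenerate and fix once and for all a countable set $D=\Q\cap\adom$, which is dense in $\adom$. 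Note also that $I_\los(F)$ being a non-empty compact interval, $T_{\min}$ and $T_{\max}$ are genuinely $\R$-valued.

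The plan for $T_{\min}$ and $T_{\max}$ is to verify, for every $p\in\adom$, the set identities
\[
\{F\in\cP\mid T_{\min}(F)>p\}=\bigcup_{a\in D,\,a>p}\{F\in\cP\mid\Phi_a(F)-\Phi_p(F)<0\},
\]
\[
\{F\in\cP\mid T_{\max}(F)<p\}=\bigcup_{a\in D,\,a<p}\{F\in\cP\mid\Phi_a(F)-\Phi_p(F)<0\}.
\]
Granting these, the right-hand sides are countable unions of members of $\Aa(\cP)$, hence lie in $\Aa(\cP)$. For $p\in\Q\setminus\adom$ the sets $\{T_{\min}>p\}$ and $\{T_{\max}<p\}$ equal $\cP$ or $\emptyset$ (since $T_{\min},T_{\max}$ take values in $\adom$), so in all cases these sets lie in $\Aa(\cP)$; since $\{(p,\infty)\mid p\in\Q\}$ and $\{(-\infty,p)\mid p\in\Q\}$ generate $\B$, it follows that $T_{\min}$ and $T_{\max}$ are $\Aa(\cP)-\B$-measurable.

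The identities themselves I would prove from assumptions~2(a) and 2(c). For ``$\supseteq$'' in the first identity: if $\Phi_a(F)<\Phi_p(F)$ for some $a\in D$ with $a>p$, then $p$ can lie neither in $I_\los(F)$ (else $\Phi_p(F)$ would be the minimal value, hence $\le\Phi_a(F)$) nor strictly to the right of $I_\los(F)$ (else $\Phi_\cdot(F)$ would be monotonically increasing through $p$ and $a$, forcing $\Phi_a(F)\ge\Phi_p(F)$), so $p<\min I_\los(F)=T_{\min}(F)$. For ``$\subseteq$'': if $p<\min I_\los(F)$, then $\Phi_p(F)$ strictly exceeds the minimal value $\Phi_{a^\ast}(F)$ attained at $a^\ast:=\min I_\los(F)\in\adom$ with $a^\ast>p$; by continuity of $a\mapsto\Phi_a(F)$ the set $\{a\in\adom\mid\Phi_a(F)<\Phi_p(F)\}$ is relatively open in $\adom$ and contains $a^\ast$, hence contains a non-empty open subinterval of $(p,\infty)\cap\adom$, which meets $D$. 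The second identity is the mirror image, using monotone decrease to the left of $I_\los(F)$ and continuity near $a^\ast:=\max I_\los(F)$. For $T_{\ent}$, observe that since every point of $I_\los(F)$ is a minimizer, $T_{\ent}(F)=\blos(T(F),F)=\min_{a\in\adom}\Phi_a(F)$, and by continuity and density of $D$ in $\adom$ this equals $\inf_{a\in D}\Phi_a(F)$, an at most countable infimum of $\Aa(\cP)-\B$-measurable functions, hence $\Aa(\cP)-\B$-measurable. (Alternatively one could compose $T_{\min}$ with $\blos$ after proving joint measurability of $(a,F)\mapsto\blos(a,F)$ by a Carath\'eodory-type argument, but the infimum representation avoids this.)

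I expect the main obstacle to be the verification of the two set identities. One must combine the continuity assumption 2(a) with the order-sensitivity 2(c) carefully; the delicate point is that ``monotonically decreasing/increasing'' is only weak monotonicity, so passing from the existence of a minimizer on the correct side of $p$ to a \emph{rational} point of $D$ at which the expected loss is \emph{strictly} smaller than $\Phi_p(F)$ genuinely requires the continuity/open-set step above, and one must track that the chosen point remains inside $\adom$ and on the correct side of $p$. The remaining steps are routine bookkeeping with generators of the Borel $\sigma$-algebra.
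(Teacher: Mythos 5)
Your proposal is correct and follows essentially the same route as the paper: measurability of $F\mapsto\blos(a,F)$ for fixed $a$ via Example~\ref{ex:moments}, the characterization of $\{F\mid T_{\min}(F)>p\}$ (and its mirror for $T_{\max}$) as a countable union over rational $b>p$ with $\blos(p,F)>\blos(b,F)$, and the representation of $T_{\ent}$ as an infimum over rational arguments (the paper's identity $\{T_{\ent}<x\}=\bigcup_{a\in\Q\cap\adom}\{\blos(a,F)<x\}$ is exactly your $\inf_{a\in D}\Phi_a$ argument). You simply spell out in more detail the continuity/strict-inequality step that the paper compresses into one sentence.
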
	
\begin{proof}
First consider $T_{\min}$.
By Example \ref{ex:moments}, for each $a\in\adom$ the functional $F\mapsto \blos(a,F)$ is $\Aa(\cP) - \B$-measurable. We then argue similarly as in Example \ref{ex:quantiles}: Let $a \in \adom$ and $F \in \cP$. We claim that under our assumptions, $T_{\min}(F) > a$ holds if and only if there exists $b \in \adom \cap \Q$ with $b > a$ such that $\blos(a,F) > \blos(b,F)$. 
Indeed, by definition of $T_{\min}$, $T_{\min}(F) > a$ holds if and only if 
$a$ is to the left of $I_\los(F)$. By monotonicity and continuity of $\blos(\cdot,F)$, 
this holds if and only if there exists $b \in (a,\infty)\cap \Q \cap \adom$ such that $\blos(a,F) > \blos(b,F)$.  
Therefore,
\begin{align*}
	\{F\in\cP \mid  T_{\min}(F) >a \} & = \bigcup_{b \in (a,\infty)\cap \Q \cap \adom} \{ F\in \cP \mid \blos(a,F) - \blos(b,F) >0\},
\end{align*}
showing measurability of $T_{\min}(F)$. The arguments for $T_{\max}$ work similarly.

For $T_{\ent}$, we obtain the measurability upon noting that for any $x\in\R$
\begin{align*}
	\{F\in\cP \mid  T_{\ent}(F) < x \} & = \bigcup_{a \in  \Q \cap \adom} \{ F\in \cP \mid \blos(a,F) < x\}.
\end{align*}
\end{proof}	

\begin{example}[Expectiles and generalized quantiles]

On the class of distributions with finite mean, \cite{NeweyPowell1987} introduced the $\tau$-expectile of a distribution $\rP$, $e_\tau(\rP)$, $\tau\in(0,1)$, as the unique solution to the equation
\[
\tau \int_{(x,\infty)} (y-x) \,\mathrm{d}\rP(y) = (1-\tau) \int_{(-\infty,x]} (x-y)  \,\mathrm{d}\rP(y)
\]
in $x$. On the class of square-integrable distributions, $e_\tau$ is elicitable with the asymmetric piecewise quadratic loss $\los_\tau(a,y) = |\ind\{y\le a\} - \tau|(a-y)^2$ as a strictly consistent loss. Hence, Theorem \ref{thm:main} establishes the measurability of $e_\tau$ on this class. 
For the measurability on the class of integrable distributions, one can consider the slightly modified loss function $\widetilde \los_\tau(a,y) = \los_\tau(a,y) - \los_\tau(0,y)$. 

\cite{BelliniKlarETAL2014} study \emph{generalized} $\tau$-quantiles which arise as minimizers of the expected loss 
$L_{\tau,\phi_1,\phi_2} (a,y) = \ind\{y\le a\} (1 - \tau)\phi_1(|a-y|) + \ind\{y> a\}\tau \phi_2(|a-y|)$ for two convex and strictly increasing functions $\phi_1, \phi_2 : [0,\infty)\to[0,\infty)$. Clearly, if $\phi_1$ and $\phi_2$ are both the absolute function (the squared function) the $\tau$-quantile ($\tau$-expectile) arises. \cite{ElliottETAL2005} studied the situation of general power functions. 
Theorem \ref{thm:main} then establishes the $\Aa(\cP) - \B$-measurability of these generalized $\tau$-quantiles.
\end{example}

\begin{example}[Variance and Expected Shortfall]\label{ex:varianceES}
On the class of square-integrable distributions $\cP$, the squared loss $L(a,y) = (a-y)^2$ is a strictly $\cP$-consistent loss for the mean functional. 
The corresponding entropy functional $T_{\ent}$ is the variance functional.
Therefore, Theorem \ref{thm:main} establishes the $\Aa(\cP) - \B$-measurability of the variance as an alternative argument to the direct application of Example \ref{ex:moments}.

For the lower or the upper Expected Shortfall at level $\alpha$ it is well known \cite[Lemmas 2.3 and 3.3]{EmbrechtsWang2015} that both versions can be written as the entropy of a consistent loss function for $q_\alpha^-$.
Therefore, Theorem \ref{thm:main} establishes an alternative argument for the measurability to that provided in Example \ref{ex:average quantiles}.
\end{example}

An alternative approach to establish the measurability of a functional is to exploit identifiability instead of elicitability. 
A functional is called identifiable if there exists a moment or identification function $\iden\colon\adom\times\odom\to\R$ such that the functional is the unique zero of the expected identification function. 
Indeed, under some technical assumptions, elicitability and identifiability are equivalent for one-dimensional functionals \citep[Theorem 5]{SteinwartPasinETAL2014}, and concerning measurability, one ends up with similar results and examples as considered in this section.

\begin{remark}\label{rem:continuity and elicitability}
Proposition 3.7 in \cite{BelliniBignozzi2015} establishes $\psi$-weak continuity of elicitable functionals on classes of probability measures with compact support under additional regularity conditions on the corresponding strictly consistent loss function. 
$\psi$-weak continuity has been discussed in \cite{KratschmeSchiedETAL2014} and generalizes continuity in the Wasserstein metric. It implies $\Aa(\cP)$-measurability in the sense studied in this paper.
\end{remark}

\section{Measurability of scoring rules}
\label{sec:scoring rules}

In this section we briefly discuss measurability of scoring rules, which are used to compare probabilistic forecasts. 
Consider maps
\begin{equation}\label{eq:scorerule}
	\rS : \cP \times \odom \to (-\infty, \infty],	
\end{equation}
such that for each $\rP \in \cP$, $\rS(\rP;\cdot)$ is Borel-measurable and that for each $\rQ \in \cP$, 
\[ \E_\rQ\big[\rS(\rP;Y) \big] = \int_\odom \rS(\rP;y)\, \dd \rQ(y)  \]
exists.
Note that we allow $\infty$ in the range of $\rS$ in \eqref{eq:scorerule}  to treat the common logarithmic score, see Example \ref{ex:log score}.
$\rS$ is called a strictly $\cP$-proper scoring rule if
$$ \E_\rQ\big[\rS(\rQ;Y) \big] <  \E_\rQ\big[\rS(\rP;Y) \big] \qquad \text{for all } \ \rP, \rQ \in \cP,\ \rP \not= \rQ,$$
meaning that forecasting the true distribution $\rQ$ uniquely minimizes the expected score.  

Recall that the ideal probabilistic forecast based on an information set $\F$ is the conditional distribution $\Pb_{Y | \F}$. 
That means in a forecasting situation, with available information $\F$ (e.g.~given by regressors), it is natural that the forecasts have the form of Markov kernels from $(\Omega, \F)$ to $(\odom, \sodom)$.
Inserting a Markov kernel $\kappa$ for $\rP$ leads to the map
\begin{equation}\label{eq:expvalue}
 \omega \mapsto \rS\big(\kappa(\omega,\cdot);Y(\omega)\big),
\end{equation} 
and averages of such expressions, which estimate expected values, are used when comparing different forecasts. 
But in order to form the expected value in \eqref{eq:expvalue} we in particular require measurability, i.e.~we need that \eqref{eq:expvalue} is an $\Aa - \B$-measurable random variable. Since $\omega \mapsto \big(\kappa(\omega,\cdot);Y(\omega)\big)$ is $\Aa - \Aa(\cP) \otimes \sodom$-measurable,
this will be guaranteed if we assume that $\rS$ in \eqref{eq:scorerule} is $\Aa(\cP) \otimes \sodom - \B$-measurable.

\begin{lemma}\label{lem:measscore}
	Suppose that $\odom$ is a one-dimensional interval, 
	and that 
	\begin{enumerate}
		\item for each $y \in \odom$, the map $\rS(\cdot; y) : \cP \to (-\infty, \infty]$ is $\Aa(\cP)  - \B$-measurable,
		\item for each $\rP \in \cP$, the map $\rS(\rP; \cdot)$ is right-continuous (or left-continuous). 
	\end{enumerate}
	Then $\rS$ in \eqref{eq:scorerule} is $\Aa(\cP) \otimes \sodom - \B$-measurable.
\end{lemma}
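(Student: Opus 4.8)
The plan is to reduce the joint measurability of $\rS$ to the separate measurability hypotheses (i) and (ii) by approximating $\rS$ pointwise by a sequence of functions each of which is manifestly jointly measurable. Since $\odom$ is a one-dimensional interval, I can exploit a natural countable grid on the $y$-coordinate. Concretely, for $n \in \N$ let $\{a_{n,k}\}_{k}$ be the ordered points of the form $j/2^n$, $j\in\Z$, that lie in (a suitable slight enlargement of) $\odom$, together with the endpoints of $\odom$ if it is bounded, so that these grid points partition $\odom$ into intervals $I_{n,k}$ of length at most $2^{-n}$. For the right-continuous case, define
\begin{equation*}
\rS_n(\rP; y) = \sum_{k} \rS\big(\rP; a_{n,k}\big)\, \ind_{I_{n,k}}(y),
\end{equation*}
where $I_{n,k}$ is the half-open interval $[a_{n,k}, a_{n,k+1})$ (intersected with $\odom$) and $a_{n,k}$ is its left endpoint. (If $\rS$ is left-continuous instead, use half-open intervals of the form $(a_{n,k-1}, a_{n,k}]$ and evaluate at the right endpoint; the argument is symmetric.)

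The key steps are then the following. First, each $\rS_n$ is $\Aa(\cP)\otimes\sodom - \B$-measurable: for a Borel set $C\subseteq(-\infty,\infty]$ the preimage $\rS_n^{-1}(C)$ is the countable union over $k$ of the rectangles $\big(\rS(\cdot;a_{n,k})^{-1}(C)\big)\times I_{n,k}$, and $\rS(\cdot;a_{n,k})^{-1}(C)\in\Aa(\cP)$ by hypothesis (i) while $I_{n,k}\in\sodom$. A countable union of measurable rectangles is in the product $\sigma$-algebra, so $\rS_n$ is jointly measurable. Second, $\rS_n(\rP;y)\to\rS(\rP;y)$ for every $(\rP,y)$: fixing $(\rP,y)$, for each $n$ there is a unique $k=k(n)$ with $y\in I_{n,k}$, and then $\rS_n(\rP;y)=\rS(\rP;a_{n,k(n)})$ with $a_{n,k(n)}\le y$ and $y-a_{n,k(n)}\le 2^{-n}\to 0$, so $a_{n,k(n)}\downarrow$-approaches $y$ from the right (more precisely, $a_{n,k(n)}\to y$ with $a_{n,k(n)}\le y$); right-continuity of $\rS(\rP;\cdot)$ from hypothesis (ii) gives $\rS(\rP;a_{n,k(n)})\to\rS(\rP;y)$, including the case where the limit is $+\infty$. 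Finally, $\rS$ is the pointwise limit of the jointly measurable functions $\rS_n$, hence $\rS$ itself is $\Aa(\cP)\otimes\sodom - \B$-measurable; the extended-real-valued range causes no difficulty since $\B$ here is understood as the Borel $\sigma$-algebra of $(-\infty,\infty]$ and pointwise limits preserve measurability into $(-\infty,\infty]$.

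The main obstacle I anticipate is purely bookkeeping rather than conceptual: one must make sure the grid points are chosen so that every $y\in\odom$ is, for all large $n$, the right limit (resp.\ left limit) of grid points lying in $\odom$ — this is where one-dimensionality of $\odom$ and its being an interval is genuinely used, since it guarantees that for $y$ not the left endpoint of $\odom$ the approximating points $a_{n,k(n)}$ eventually lie in $\odom$, and for $y$ equal to the left endpoint of $\odom$ one simply takes $a_{n,k(n)}=y$ itself (include the left endpoint among the grid points whenever $\odom$ has a minimum). A second minor point is handling the value $+\infty$: right-continuity at a point $y$ with $\rS(\rP;y)=+\infty$ still forces $\rS(\rP;a_{n,k(n)})\to+\infty$, so the pointwise convergence, and with it the measurability conclusion, is unaffected. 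No deeper difficulty arises, and hypotheses (i)–(ii) are used exactly once each, as indicated.
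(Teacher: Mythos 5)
Your proposal follows essentially the same route as the paper: approximate $\rS$ by dyadic step functions in the $y$-variable with coefficients $\rS(\cdot;a_{n,k})$, observe that each such step function is jointly measurable as a countable union of measurable rectangles, and conclude by pointwise convergence. However, there is a direction error in the convergence step. You partition $\odom$ into intervals $I_{n,k}=[a_{n,k},a_{n,k+1})$ and evaluate at the \emph{left} endpoint, so for $y\in I_{n,k(n)}$ the evaluation points satisfy $a_{n,k(n)}\le y$ and converge to $y$ \emph{from below} (your own parenthetical ``more precisely, $a_{n,k(n)}\to y$ with $a_{n,k(n)}\le y$'' concedes this, contradicting the preceding claim that they approach $y$ from the right). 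Right-continuity of $\rS(\rP;\cdot)$ at $y$ controls $\lim_{t\downarrow y}\rS(\rP;t)$, i.e.\ limits from \emph{above}; it says nothing about $\lim_{t\uparrow y}\rS(\rP;t)$. If $\rS(\rP;\cdot)$ has a jump at $y$ (as a right-continuous function may), your $\rS_n(\rP;y)$ converges to the left limit $\rS(\rP;y-)\neq\rS(\rP;y)$, so the claimed pointwise convergence fails under hypothesis \textit{(ii)} as stated.

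The fix is merely to swap your two conventions: for right-continuous $\rS(\rP;\cdot)$ use intervals of the form $(a_{n,k-1},a_{n,k}]$ and evaluate at the right endpoint, so that the evaluation points decrease to $y$ from above (this is exactly what the paper does, with $\odom=(0,1]$ and $\rS_n(\rP,y)=\sum_k \rS(\rP;k/2^n)\ind_{((k-1)/2^n,k/2^n]}(y)$); reserve left-endpoint evaluation on $[a_{n,k},a_{n,k+1})$ for the left-continuous case. Since your argument is symmetric and you treat both cases (just with the labels interchanged), everything else in the proposal — the rectangle decomposition for joint measurability of $\rS_n$, the boundary bookkeeping for the endpoints of $\odom$, and the remark that pointwise limits preserve measurability into $(-\infty,\infty]$ — is sound.
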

\begin{proof}	
	For a finite interval, say $\odom = (0,1]$, for $n \in \N$, by (i) the map 
	$$ \rS_n(\rP,y) = \sum_{k=1}^{2^n}\, \rS(\rP; k/2^n)\, \ind_{((k-1)/2^n, k/2^n]}(y)$$
	is $\Aa(\cP) \otimes \sodom - \B$-measurable and by (ii), $\rS_n(\rP,y) \to \rS(\rP,y)$ for $n \to \infty$. An infinite interval can be treated analogously by extending the sum to an infinite one. 
\end{proof}	
\begin{remark}
	The argument in Lemma \ref{lem:measscore} can be used to obtain a similar result for multivariate observations, where right-continuity is defined, as for multivariate distribution functions, by taking limits from above in each coordinate. However, the form of the observation domain $\odom$ is also an issue, and to proceed as above we require that $\odom$ is a possibly infinite hyperrectangle.    
\end{remark}
\begin{example}[Continuous ranked probability score]
	The continuous ranked probability score (CRPS) of a distribution $F$ with finite first moment is 
	\begin{equation}\label{eq:brierscore}
		\CRPS(F,y) = \int_{- \infty}^{\infty} \big(F(z) - \ind\{y \leq z\} \big)^2\, \mathrm{d} z,\quad F \in \cP,
	\end{equation}
	see \cite{GneitingRaftery2007}.
	It has an alternative representation in terms of an integrated quantile loss, and also as
	\begin{align}\label{eq:crpsexpec}
	\CRPS(F,y) & = \E_F\big[ \big|y- X \big|\big] - \frac{1}{2}\, \E_F \big[\big|X'- X \big|\big],
	\end{align}
	where $X$ and $X'$ are independent copies with distribution $F$. 
	From both \eqref{eq:crpsexpec} and \eqref{eq:brierscore} one can readily see that $\CRPS(F,y)$ is continuous as a function of $y$, invoking the dominated convergence theorem.	
	Further, for fixed $y$, \eqref{eq:crpsexpec} implies measurability of $\CRPS(F,y)$ as a map in $F$, while from \eqref{eq:brierscore} the measurability can be obtained with a Riemann sum approximation similarly as in Example \ref{ex:average quantiles}. Similar arguments can be made and hence the measurability is obtained for weighted versions of the CRPS as introduced in \cite{GneitingRanjan2011} and \cite{HolzmannKlar2017}.
\end{example}

\begin{example}[Logarithmic score and local scoring rules]
\label{ex:log score}
	Assume that $\cP$ is a family of right-continuous Lebesgue densities on the real line. We write $\rp \in \cP$ and denote the distribution function associated with $\rp$ by $F_\rp$. 
	The logarithmic score is 
	\[
	 \LOGS(\rp,y) = - \log \rp(y),\qquad \rp \in \cP,
	\]
	see \cite{Good1952} and \cite{GneitingRaftery2007}.
	
	To apply Lemma \ref{lem:measscore}, observe that $\LOGS(\rp,y)$ is right-continuous as a function of $y$ by our assumption on $\rp$ and continuity of the logarithm. 
	For the measurability in $\rp$ for fixed $y$, note that by right-continuity, 
	$$\lim_{n \to \infty} \log\Big(n\, \big(F_\rp(y+1/n) - F_\rp(y)\big) \Big) = \log \rp(y),$$
	showing measurability, since the evaluation of the distribution function is measurable. Measurability extends to weighted versions of the logarithmic score as in \cite{DiksETAL2011} and in \cite{HolzmannKlar2017}, and under appropriate assumptions on higher derivatives to higher-order local scoring rules \citep{EhmGneiting2012, ParryDawidLauritzen2012} such as the Hyv\"arinen score \citep{Hyvaerinen2005}.

\end{example}

\section*{Acknowledgements}

We would like to thank Johannes Resin for valuable feedback on an earlier version of the paper.

\bibliographystyle{apalike}

\end{document}